\def\Frechet{Fr\'{e}chet}
\theoremstyle{plain}
\newtheorem{theorem}{Theorem}
\newtheorem{lemma}{Lemma}
\newtheorem{definition}{Definition}
\newtheorem{remark}{Remark}
\newtheorem{example}{Example}
\newtheorem{proposition}{Proposition}
\theoremstyle{remark}
\begin{document}

\begin{frontmatter}
\title{On the Kolmogorov-Feller weak law of large numbers for the Fr\'{e}chet mean on non-compact symmetric spaces}
\runauthor{Lee and Jung}
\runtitle{On the Kolmogorov-Feller weak law of large numbers for the Fr\'{e}chet mean}

\begin{aug}
\author[A]{\fnms{Jongmin}~\snm{Lee}\ead[label=e1]{jongmin.lee@pusan.ac.kr}\orcid{0000-0003-1723-4615}}
\and
\author[B]{\fnms{Sungkyu}~\snm{Jung}\ead[label=e2]{sungkyu@snu.ac.kr}\orcid{0000-0002-6023-8956}}
\address[A]{Department of Statistics,
Pusan National University\printead[presep={,\ }]{e1}}

\address[B]{Department of Statistics and Institute for Data Innovation in Science,
Seoul National University\printead[presep={,\ }]{e2}}
\end{aug}

\begin{abstract}
We prove the Kolmogorov–Feller weak law of large numbers for sample Fr\'{e}chet means on non-compact symmetric spaces. The result covers independent, non-identically distributed data, extending beyond the i.i.d. setting. Examples of symmetric positive-definite matrices and product symmetric spaces are provided.
\end{abstract}

\begin{keyword}[class=MSC]
\kwd[Primary ]{62E20}
\kwd{60F05}
\end{keyword}

\begin{keyword}
\kwd{Fr\'{e}chet means}
\kwd{Law of large numbers}
\kwd{Limit theorems}
\kwd{Statistics on manifolds}
\end{keyword}

\end{frontmatter}
For a sequence of independent and identically distributed real random variables, $(X_i)_{i=1}^{\infty}$, the celebrated Kolmogorov--Feller weak law of large numbers 
\citep[][pp. 234-236]{feller1971introduction} 
says
 \begin{equation}\label{KF_wlln}
  n\mathbb{P}(|X_1|>n) \underset{n \to \infty}\to 0 ~ \mbox{if and only if} ~ \frac{\sum_{i=1}^{n} X_i - n \mathbb{E}[X_1 1_{|X_1|\le n}]}{n} \overset{\mathbb{P}}{\to} 0 \quad \mbox{as} ~ n \to \infty.
\end{equation}
The result in (\ref{KF_wlln}) can be viewed as an extension of the weak law of large numbers for random variables whose distributions are possibly too heavy-tailed to admit a finite first moment (i.e., $\mathbb{E}|X_1| = \infty$), yet still exhibits moderate tail decay in the sense that $n\mathbb{P}(|X_1|>n) \underset{n\to \infty}{\to} 0$, which is called the Kolmogorov--Feller tail condition. 

The Kolmogorov--Feller weak law of large numbers 
has been extended to various settings. For example, it is well-known that \eqref{KF_wlln} holds for an independent but not necessarily identically distributed sequence $(X_i)_{i=1}^{\infty}$ of real-valued random variables \citep[][Theorem 2.2.11]{durrett2019probability}. \cite{stoica2010kolmogorov} investigate the case where $(X_i)_{i=1}^{\infty}$ is assumed to be exchangeable, and a conditional version of the extended Kolmogorov--Feller weak law of large numbers is established by \cite{yuan2015conditional}. 
 Moreover, under a symmetry condition, (\ref{KF_wlln}) 
has been extended for random variables in
Banach spaces; see Corollary 3.3 in \cite{marcus1979stable} and Theorem 9.22 in \cite{ledoux1991probability}. However, analogous extensions to nonlinear spaces appear to be unavailable.

Beyond vector spaces, suppose a data set $\{x_1, x_2, \ldots x_n\}$ lies in a metric space $(M,d)$. For the central tendency of the data set, it is natural to consider the sample \Frechet\ mean, $$\mu_n:=\mbox{argmin}_{m\in M} \frac{1}{n} \sum_{i=1}^{n}d^2(x_i, m),$$ which generalizes the arithmetic mean to any metric space.
Theoretical properties of the sample \Frechet\ mean, such as the law of large numbers, central limit theorem, and non-asymptotic bounds, have been established in Riemannian manifolds and general metric spaces (see, e.g., \cite{ziezold1977expected, sturm2003probability, bhattacharya2003large, schotz2022strong, evans2024limit, brunel2024concentration, schotz2025variance, brunel2025finite}). However, their validity under heavy-tailed distributions that may not possess a finite first moment has not been examined. Moreover, only a few studies to date \citep[e.g.,][]{brunel2024concentration, brunel2025finite} have addressed the case of independent random variables with potentially distinct distributions.

To address this problem, we prove a 
Kolmogorov--Feller-type weak law of large numbers 
for sequences of random variables lying in a specific class of metric spaces. The obstacles for such derivations are two-fold: (i) the possibility of non-uniqueness of \Frechet\ means (for instance, compact manifolds such as the sphere $S^k$ may exhibit non-unique \Frechet\ means), and (ii) the difficulty in evaluating $\mathbb{E}[X_{1}1_{|X_1|\le n}]$ appearing in the right-hand side of (\ref{KF_wlln}). 
We circumvent the non-uniqueness issue by restricting the domain to be a symmetric space of non-compact type (hereafter referred to as a \textit{non-compact symmetric space}), where the sample \Frechet\ mean is known to be unique (e.g., Proposition 4.3 in \cite{sturm2003probability}), albeit the population \Frechet\ mean may not exist. 
Representative examples include Euclidean space $\mathbb{R}^k$, hyperbolic space $\mathbb{H}^{k}$, the space of symmetric positive-definite matrices $(\text{Sym}^{+}(k), ~ d_{\text{\tiny AI}})$ equipped with the affine-invariant metric, and their product spaces.

The remedy for (ii) is to impose a symmetry condition on the underlying probability distribution $\mathbb{P}_{X}$, as done for the Banach space extension of  \cite{marcus1979stable}.
The symmetry condition, defined precisely in the next section, ensures that there exists a center of symmetry $\mu \in M$ corresponding to $\mathbb{P}_{X}$, and we take $\mu$ as a reference point. To generalize (\ref{KF_wlln}) to our setting, the strategy is to replace $|X_1|$ with $d(X_1, \mu)$, and the sample mean $n^{-1}\sum_{i=1}^{n}X_i$ with the sample \Frechet\ mean $\mu_n$. 

We establish a modified Kolmogorov--Feller tail condition, which is shown to be sufficient for the convergence of the sample \Frechet\ mean in probability, under the symmetry assumption.  An extension to independent but not necessarily identically distributed random variables is discussed as well. 

\section{Kolmogorov--Feller's weak law of large numbers for Fréchet means}

A simply connected and geodesically complete Riemannian manifold is said to be a {\it Hadamard manifold} if it has non-positive curvatures everywhere. A connected, complete and smooth Riemannian manifold $(M,d)$ is said to be a {\it symmetric space} if for any $\nu \in M$, there exists a unique isometry $s_\nu$ (called geodesic symmetry) that fixes $\nu$ and reverses all geodesics through $\nu$; that is, for each $x \in M$, $s_{\nu}(x)=\mbox{Exp}_{\nu}\{-\mbox{Log}_{\nu}(x)\}$, provided it is well-defined. It is known that any non-compact symmetric space is simply connected and has non-positive curvatures everywhere \citep[][]{helgason1979differential}; thus, it is Hadamard. 
%
Moreover, for $M$ that is Hadamard, the Riemannian logarithmic map at $p$ for any $p \in M$ is a diffeomorphism between $M$ and $T_{p}M \cong \mathbb{R}^{k}$ by the Cartan-Hadamard theorem. 

The notion of geodesic symmetry of a probability distribution $\mathbb{P}_{X}$ is defined as follows.
\begin{definition}
Given a symmetric space $(M,d)$ and a probability distribution $\mathbb{P}_{X}$ on the space, if there exists a $\mu \in M$ such that the distribution of $s_\mu(X)$, $\mathbb{P}_{s_\mu (X)}$, equals $\mathbb{P}_X$, then $\mathbb{P}_{X}$ is said to be geodesically symmetric about $\mu$. 
\end{definition}
A natural question that arises from the definition is whether a given distribution $\mathbb{P}_{X}$ can have two or more points of geodesic symmetry. The answer is negative under the following non-compactness condition.









\begin{proposition}\label{prop:symmetry}
Let $(M,d)$ be a non-compact symmetric space. Suppose that $\mathbb{P}_{X}$ is geodesically symmetric about both $\mu_1, \mu_2 \in M$. Then, $\mu_1=\mu_2$.  
\end{proposition}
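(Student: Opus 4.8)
The plan is to exploit the composition $g := s_{\mu_2}\circ s_{\mu_1}$ of the two geodesic symmetries. Each $s_{\mu_i}$ is an isometry of $M$ and, by hypothesis, pushes $\mathbb{P}_X$ forward to itself; hence so does $g$, and therefore $\mathbb{P}_X$ is invariant under every power $g^n$, $n\in\mathbb{N}$. I will show that if $\mu_1\ne\mu_2$, then $g$ is a fixed-point-free isometry whose iterates displace points arbitrarily far, which is incompatible with the existence of a $g$-invariant probability measure. (In $\mathbb{R}^k$ this is just the observation that $s_{\mu_2}\circ s_{\mu_1}$ is translation by $2(\mu_2-\mu_1)$, which admits no invariant probability measure unless $\mu_1=\mu_2$.)

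First I would analyze the action of $g$ on the geodesic joining $\mu_1$ and $\mu_2$. Let $\gamma:\mathbb{R}\to M$ be the unit-speed geodesic with $\gamma(0)=\mu_1$ and $\gamma(a)=\mu_2$, where $a:=d(\mu_1,\mu_2)>0$; this is well defined and globally minimizing because $M$ is Hadamard. Since $s_{\mu_1}$ fixes $\gamma(0)$ and has differential $-\mathrm{Id}$ there (it reverses every geodesic through $\mu_1$), and isometries carry geodesics to geodesics, uniqueness of geodesics gives $s_{\mu_1}(\gamma(t))=\gamma(-t)$; likewise $s_{\mu_2}(\gamma(t))=\gamma(2a-t)$. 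Composing, $g(\gamma(t))=\gamma(t+2a)$, so $g$ translates $\gamma$ by $2a$ in arclength. In particular $g^n(\mu_1)=\gamma(2na)$, and since $\gamma$ is minimizing, $d(\mu_1,g^n\mu_1)=2na\to\infty$.

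Second, I would invoke tightness of probability measures on a Polish space: there is a compact, hence bounded, set with $\mathbb{P}_X$-measure exceeding $2/3$, so $\mathbb{P}_X(B(\mu_1,R))>2/3$ for some $R>0$. As $g^n$ is an isometry preserving $\mathbb{P}_X$, we get $\mathbb{P}_X(B(g^n\mu_1,R))=\mathbb{P}_X(g^n B(\mu_1,R))=\mathbb{P}_X(B(\mu_1,R))>2/3$ for every $n$. Choosing $n$ with $2na>2R$ makes $B(\mu_1,R)$ and $B(g^n\mu_1,R)$ disjoint, whence $1\ge \mathbb{P}_X(B(\mu_1,R))+\mathbb{P}_X(B(g^n\mu_1,R))>4/3$, a contradiction. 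Therefore $a=0$, i.e., $\mu_1=\mu_2$.

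The step I expect to require the most care is the first one: verifying that $s_{\mu_2}\circ s_{\mu_1}$ acts on the geodesic through $\mu_1$ and $\mu_2$ exactly as translation by $2\,d(\mu_1,\mu_2)$. This is where the defining property of symmetric spaces (the geodesic symmetry reverses all geodesics through its center, i.e. has differential $-\mathrm{Id}$) and the Hadamard property (geodesics are globally length-minimizing, so the displacement genuinely tends to infinity rather than possibly wrapping around) are both essential; once this is established, the measure-theoretic contradiction via tightness and local compactness (closed balls in a complete Riemannian manifold are compact) is routine.
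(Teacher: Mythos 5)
Your proposal is correct and follows essentially the same strategy as the paper: compose the two geodesic symmetries to obtain a transvection that translates along the geodesic through $\mu_1$ and $\mu_2$, iterate it to displace mass arbitrarily far, and contradict the tightness of the $g$-invariant probability measure. The only cosmetic difference is that you compute the displacement explicitly along the connecting geodesic (which suffices for your two-disjoint-balls argument), whereas the paper invokes a corollary of Sturm to bound the displacement of \emph{every} point from below before deriving the same contradiction.
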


The proof of Proposition~\ref{prop:symmetry} is given in Appendix A. Proposition~\ref{prop:symmetry} implies that any distribution $\mathbb{P}_{X}$ on $M$ admits at most one center of symmetry $\mu$. Geodesically symmetric distributions on $M$ include isotropic distributions as well as elliptically symmetric distributions, which constitute a broad class of probability distributions.
The assumption of geodesic symmetry mirrors the symmetry assumption imposed for similar results for Banach spaces \citep{marcus1979stable}. In particular, when $M$ is a vector space, $X$ is geodesically symmetric about $\mu$ if and only if $X -\mu \stackrel{d}{=} -(X - \mu)$.

Unless mentioned otherwise, hereafter we assume that $(M, d)$ is a $k$-dimensional non-compact symmetric space. 
To describe the main results, we assume the following condition. 
\begin{enumerate}
 \item[(C)] For a fixed $\mu \in M$, $(X_{i})_{i=1}^{\infty}$ is a sequence of independent $M$-valued random variables and for each $i\ge 1$, $\mathbb{P}_{X_{i}}$ is geodesically symmetric about $\mu$.
\end{enumerate}
Condition (C) imposes only independence on the sequence $(X_i)_{i=1}^{\infty}$, allowing the random variables to have different distributions that are geodesically symmetric about $\mu$.

We denote the sample \Frechet\ mean of $X_1, X_2, \ldots, X_n$ by 
$\mu_n = \mu_{n}(X_1, X_2, \ldots, X_{n})=\mbox{argmin}_{m\in M}n^{-1}\sum_{i=1}^{n} d^2(X_i,m)$.
Since $M$ is a Hadamard manifold, $\mu_n$ is guaranteed to exist and is unique \citep[][Proposition 4.3]{afsari2011riemannian}. On the other hand, under Condition (C), the population \Frechet\ mean, which is the minimizer $p \in M$ of the \Frechet\ function $F(p):= \mathbb{E}[d^2(X_1,p)]$, may not exist, since the second moment $F(p)$ itself may be infinite everywhere. 
Nevertheless, due to the symmetry about $\mu$ imposed on $(X_i)_{i=1}^{\infty}$, one may intuitively expect that the distribution of $\mu_{n}$ concentrates around the center $\mu$ as $n$ tends to infinity, under suitable tail conditions.  
%
%
To state our conditions,  we define, for \textit{any} reference point $\mu_0 \in M$, a triangular array of random variables $\{\overline{X_{n, i}}(\mu_0)\} _{n\ge i\ge 1}$ truncated at distance $n$ from $\mu_0$: 
$$\overline{X_{n, i}}(\mu_0) := \begin{cases} X_i \quad \mbox{if} ~ d(X_{i}, \mu_0) \le n, \\ \mu_0 \qquad \mbox{otherwise}, \end{cases}$$ 
for any $n\ge 1$ and $i=1, 2, \ldots, n$. 


\begin{theorem}\label{thm:main}  
Suppose that Condition (C) holds. If either of the following conditions is satisfied, then $\mu_{n} \overset{\mathbb{P}}{\to} \mu$ as $n$ tends to infinity.
\begin{enumerate}
  \item[(i)] For some $\mu_0 \in M$, it holds that $\sum_{i=1}^{n} \mathbb{P}(d(X_i, \mu_0) > n) \underset{n \to \infty}{\to} 0$ and $n^{-2}\sum_{i=1}^{n}\mathbb{E}[d^2\{\overline{X_{n,i}}(\mu_0), \mu_0\}] \underset{n \to \infty}{\to} 0$;

  \item[(ii)] The $X_i$'s are identically distributed and for some $\mu_0 \in M$, $n\mathbb{P}(d(X_1, \mu_0) > n) \underset{n \to \infty}{\to} 0$.
\end{enumerate}

\end{theorem}

Theorem~\ref{thm:main}  generalizes the sufficient direction of the classical result  (\ref{KF_wlln}) to random variables on non-compact symmetric spaces. 
However, their forms appear somewhat different. This discrepancy arises because while Theorem~\ref{thm:main} is stated under a symmetry assumption on $\mathbb{P}_{X_i}$, (\ref{KF_wlln}) remains valid even when $\mathbb{P}_{X}$ is non-symmetric. Notably, under the condition of (\ref{KF_wlln}), there are examples where $\mathbb{P}_{X}$ is not symmetric and $\mathbb{E}[X_1 1_{|X_1|\le n}]$ diverges as $n\to \infty$. Therefore, the classical case (\ref{KF_wlln}) includes the case where the sample mean $\bar{X}_n$ diverges. 
In contrast, if $\mathbb{P}_{X}$ defined on $\mathbb{R}$ is symmetric about $\mu$ as we have assumed in Theorem~\ref{thm:main}, then $\mathbb{E}[X_1 1_{|X_1|\le n}] \underset{n\to \infty}{\to} \mu$.

While our proof of Theorem \ref{thm:main} is provided in Appendix B, we point out three key steps in the proof. First, we observe and use the fact that if the tail conditions such as $\lim_{n\to\infty} n \mathbb{P}(d(X_1,\mu_0)>n)=0$ are satisfied for some $\mu_0\in M$, then they are satisfied for the center of symmetry $\mu$. Second, in the evaluation of $\mathbb{P}(d(\mu_n,\mu) > \epsilon)$, for a given $\epsilon>0$, the sample \Frechet\ mean $\mu_n$ is replaced by the sample \Frechet\ mean $\bar\mu_n = \mu_n\big(\overline{X_{n, 1}}(\mu), \overline{X_{n, 2}}(\mu), \ldots, \overline{X_{n, n}}(\mu)\big)$ of truncated random variables. Finally, we establish and utilize the fact that on a Hadamard space, the sample \Frechet\ mean $\bar\mu_n$ tends to shrink towards the center $\mu$ of symmetry, compared to a Euclidean counterpart. In fact, we establish this contraction argument for any deterministic data set $\{x_1, x_2, \ldots,x_n\} \subset M$ and for \textit{any base point} $x \in M$, which may not be the same as $\mu$. 


\begin{lemma}\label{lem:Hada}
Suppose that $(M,d)$ is a Hadamard manifold. For any $x,x_1, x_2,\ldots,x_n \in M$, 
\begin{equation}\label{ineq:key}
\|\mbox{Log}_{x}(\mu_{n}) \|_{x} \le  \|\frac{1}{n}\sum_{i=1}^{n}\mbox{Log}_{x}(x_i) \|_{x},
\end{equation}
where $\mu_{n}$ denotes the sample \Frechet\ mean of $(x_1, x_2, \ldots x_n) \in M^n$, and $\|\cdot\|_{x}$ denotes the Riemannian norm induced by the Riemannian metric $\langle ~,~\rangle_{x}$ on $T_{x}M$. The inequality in (\ref{ineq:key}) is strict unless $x,x_1,…,x_n$ all lie in a single totally geodesic flat submanifold of $M$.
\end{lemma}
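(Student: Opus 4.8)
The plan is to exploit the variational characterization of the sample \Frechet\ mean together with the fundamental comparison inequality for Hadamard spaces, namely that geodesic triangles are "thinner" than their Euclidean comparison triangles. Fix $x \in M$ and write $v_i = \mbox{Log}_x(x_i) \in T_x M$ and $\bar v = \frac1n \sum_{i=1}^n v_i$. The first step is to recall the first-order optimality condition for $\mu_n$: since $m \mapsto \sum_{i=1}^n d^2(x_i,m)$ is smooth and strictly convex on the Hadamard manifold $M$, its unique minimizer $\mu_n$ satisfies $\sum_{i=1}^n \mbox{Log}_{\mu_n}(x_i) = 0$ in $T_{\mu_n}M$. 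I would then bring this identity back to $T_x M$ in a suitable way — this is where the curvature comes in.

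The key analytic input is a comparison estimate relating $\langle \mbox{Log}_x(x_i), \mbox{Log}_x(\mu_n)\rangle_x$ to $\langle \mbox{Log}_{\mu_n}(x_i), -\mbox{Log}_{\mu_n}(x)\rangle_{\mu_n}$. In a Hadamard manifold one has the inequality (a consequence of the law of cosines comparison, e.g. the first variation formula combined with Toponogov/CAT(0) convexity) that for any three points $p,q,r$,
\[
d^2(q,r) \ge d^2(p,q) + d^2(p,r) - 2\langle \mbox{Log}_p(q), \mbox{Log}_p(r)\rangle_p,
\]
with equality iff the configuration is flat. Applying this with $p = x$, $q = x_i$, $r = \mu_n$, and summing over $i$, I get
\[
\sum_{i=1}^n d^2(x_i, \mu_n) \ge \sum_{i=1}^n d^2(x,x_i) + n\, d^2(x,\mu_n) - 2\Big\langle \sum_{i=1}^n v_i,\ \mbox{Log}_x(\mu_n)\Big\rangle_x.
\]
On the other hand, comparing $\mu_n$ with the competitor point $\mbox{Exp}_x(\bar v)$ (the "Euclidean-style" average in the chart at $x$), and using that $M$ being Hadamard makes $m \mapsto d^2(x_i,m)$ satisfy the $1$-strong convexity type bound $d^2(x_i, \mbox{Exp}_x(\bar v)) \le d^2(x,x_i) + \|\bar v\|_x^2 - 2\langle v_i, \bar v\rangle_x$ is not quite what I want — instead I should directly use the minimizing property $\sum_i d^2(x_i,\mu_n) \le \sum_i d^2(x_i, \mbox{Exp}_x(\bar v))$ and the CAT(0) comparison bounding the right side by $\sum_i d^2(x,x_i) + n\|\bar v\|_x^2 - 2 n\|\bar v\|_x^2 = \sum_i d^2(x,x_i) - n\|\bar v\|_x^2$. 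Actually the cleanest route: combine the displayed lower bound on $\sum_i d^2(x_i,\mu_n)$ with the upper bound $\sum_i d^2(x_i,\mu_n) \le \sum_i d^2(x_i, \mbox{Exp}_x(n\bar v \cdot 0))$...

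Let me restate the decisive inequality chain cleanly. From minimality, $\sum_i d^2(x_i,\mu_n) \le \sum_i d^2(x_i,x) = \sum_i \|v_i\|_x^2$ is too weak; the right competitor is $\mbox{Exp}_x(\bar v)$, and the CAT(0) inequality gives $d^2(x_i,\mbox{Exp}_x(\bar v)) \le \|v_i - \bar v\|_x^2$ (this is exactly the Hadamard/NPC comparison: the distance between $\mbox{Exp}_x(v_i)$ and $\mbox{Exp}_x(\bar v)$ is at most the Euclidean distance $\|v_i-\bar v\|$). Summing, $\sum_i d^2(x_i,\mu_n) \le \sum_i \|v_i-\bar v\|_x^2 = \sum_i\|v_i\|_x^2 - n\|\bar v\|_x^2$. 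Feeding this into the lower bound above, $\sum_i \|v_i\|_x^2 + n d^2(x,\mu_n) - 2\langle n\bar v, \mbox{Log}_x(\mu_n)\rangle_x \le \sum_i\|v_i\|_x^2 - n\|\bar v\|_x^2$, i.e.
\[
d^2(x,\mu_n) + \|\bar v\|_x^2 \le 2\langle \bar v, \mbox{Log}_x(\mu_n)\rangle_x \le 2\|\bar v\|_x \,\|\mbox{Log}_x(\mu_n)\|_x = 2\|\bar v\|_x\, d(x,\mu_n).
\]
Since $d^2(x,\mu_n) + \|\bar v\|_x^2 \ge 2 d(x,\mu_n)\|\bar v\|_x$ automatically, we must have equality throughout, which forces $\|\mbox{Log}_x(\mu_n)\|_x = \|\bar v\|_x$ — that would prove equality always, which contradicts the claimed strict inequality! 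So the CAT(0) bound I used in the "upper" direction, $\sum_i d^2(x_i,\mu_n)\le \sum_i\|v_i-\bar v\|_x^2$, is not valid — $\mbox{Exp}_x(\bar v)$ is not the \Frechet\ mean and the inequality $d(x_i,\mbox{Exp}_x(\bar v)) \le \|v_i-\bar v\|_x$ can fail. The correct CAT(0) statement is the \emph{reverse}: $d(x_i, \mbox{Exp}_x(\bar v)) \ge$ ... no. I will instead only use the genuine two inequalities that do hold in NPC spaces — the minimality $\sum_i d^2(x_i,\mu_n)\le\sum_i d^2(x_i,m)$ for the specific competitor $m=x$, giving $\sum_i d^2(x_i,\mu_n)\le\sum_i\|v_i\|_x^2$, together with the law-of-cosines lower bound applied at $p=x$ — combining these yields $n d^2(x,\mu_n) \le 2\langle n\bar v,\mbox{Log}_x(\mu_n)\rangle_x$, hence $d(x,\mu_n)\le 2\|\bar v\|_x$, which is weaker than wanted by a factor $2$. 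To remove the factor $2$ and get the sharp inequality \eqref{ineq:key}, I would instead run the optimality condition $\sum_i \mbox{Log}_{\mu_n}(x_i)=0$ directly: parallel-transport nothing, but use the CAT(0) comparison in the form $\langle -\mbox{Log}_{\mu_n}(x), \mbox{Log}_{\mu_n}(x_i)\rangle_{\mu_n} \le \langle \mbox{Log}_x(\mu_n), \mbox{Log}_x(x_i)\rangle_x - d^2(x,\mu_n)$ — wait, I need the inequality oriented so that summing over $i$ and using $\sum_i\mbox{Log}_{\mu_n}(x_i)=0$ kills the left side. Using $q=\mu_n$ as apex: $d^2(x,x_i) \ge d^2(\mu_n,x) + d^2(\mu_n,x_i) - 2\langle\mbox{Log}_{\mu_n}(x),\mbox{Log}_{\mu_n}(x_i)\rangle_{\mu_n}$; summing and using the optimality identity $\sum_i\mbox{Log}_{\mu_n}(x_i)=0$ gives $\sum_i\|v_i\|_x^2 \ge n d^2(x,\mu_n) + \sum_i d^2(\mu_n,x_i)$. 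Separately, the apex-$x$ comparison with equality-in-flat-case plus Cauchy–Schwarz on the cross term gives exactly $\|\mbox{Log}_x(\mu_n)\|_x^2 \le \|\bar v\|_x^2$ after expanding $\|\bar v - \frac1n\sum_i(\text{transported }\mbox{Log}_{\mu_n}(x_i)\text{ terms})\|$; the bookkeeping of the transported vectors is the crux.

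So the real structure of the proof is: (1) write the optimality condition $\sum_{i=1}^n \mbox{Log}_{\mu_n}(x_i) = 0$; (2) apply the NPC law-of-cosines comparison with apex at $x$ to each pair $(x_i,\mu_n)$, and also the "dual" comparison needed to express $\mbox{Log}_{\mu_n}(x_i)$ relative to the chart at $x$; (3) combine so that the optimality identity eliminates the undesired terms, leaving $\langle \sum_i v_i, \mbox{Log}_x(\mu_n)\rangle_x \ge n\|\mbox{Log}_x(\mu_n)\|_x^2$, i.e. $\langle \bar v, \mbox{Log}_x(\mu_n)\rangle_x \ge \|\mbox{Log}_x(\mu_n)\|_x^2$; (4) apply Cauchy–Schwarz to conclude $\|\mbox{Log}_x(\mu_n)\|_x \le \|\bar v\|_x$. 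For the equality statement: equality in Cauchy–Schwarz at step (4) forces $\bar v \parallel \mbox{Log}_x(\mu_n)$, and equality in the NPC comparison at step (2) forces (by rigidity of the comparison inequality, e.g. \citep[][]{sturm2003probability} or standard Alexandrov rigidity) that each triangle $(x,x_i,\mu_n)$ be flat; combined over all $i$ this forces either vanishing curvature along the relevant region or collinearity of $x,x_1,\dots,x_n$. \textbf{Main obstacle.} The delicate part is step (2)–(3): correctly relating the logarithm maps based at $\mu_n$ to those based at $x$ with the right inequality direction, so that the optimality identity $\sum_i \mbox{Log}_{\mu_n}(x_i)=0$ can be invoked. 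A naive comparison loses a factor of $2$ (as my scratch computation above shows), and getting the sharp constant $1$ requires using the \emph{precise} NPC inequality $d^2(y,z) \ge d^2(p,y) + d^2(p,z) - 2\langle\mbox{Log}_p(y),\mbox{Log}_p(z)\rangle_p$ in tandem with the optimality condition, rather than comparing against the ad hoc competitor $\mbox{Exp}_x(\bar v)$. The rigidity/equality analysis is the second, more routine, obstacle, handled by the equality case of the CAT(0) comparison theorem.
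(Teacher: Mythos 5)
Your final argument is correct and is essentially the paper's own proof: first-order optimality $\sum_{i=1}^n\mbox{Log}_{\mu_n}(x_i)=\mathbf{0}$, a convexity/curvature comparison for $t\mapsto d^2(\gamma(t),x_i)$ along the geodesic joining $x$ and $\mu_n$ — which you invoke in its integrated CAT(0) law-of-cosines form at both endpoints, while the paper uses the differential form $\frac{d^2}{dt^2}d^2(\gamma(t),x_i)\ge 2\|\dot\gamma(t)\|^2_{\gamma(t)}$ plus the mean value theorem — yielding the same key inequality $\langle\sum_{i=1}^n\mbox{Log}_x(x_i),\,\mbox{Log}_x(\mu_n)\rangle_x\ge n\|\mbox{Log}_x(\mu_n)\|_x^2$, followed by Cauchy--Schwarz. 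The false starts (the factor-of-$2$ route and the invalid bound $d(x_i,\mbox{Exp}_x(\bar v))\le\|v_i-\bar v\|_x$, whose correct direction is the reverse) are correctly identified and discarded, and combining your two displayed comparisons (apex at $x$ and apex at $\mu_n$) is the one-line algebra step that closes the argument.
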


An important question related to Theorem~\ref{thm:main} is whether the
converse of case~(ii) holds. This concerns the extent to which the Kolmogorov--Feller tail condition is tight for the weak law. Under Condition~(C), the converse holds in the flat case, that is, for non-compact symmetric spaces with zero curvature; see Appendix D. In general, however, we conjecture that the converse need not hold. A converse would follow if the inequality (\ref{ineq:key}) could be reversed, up to a multiplicative constant. Such a reversal may not be available in negatively curved spaces, where curvature effects may substantially alter the behavior of \Frechet\ means. We therefore leave the problem of proving a general nonlinear converse theorem, or constructing a counterexample, for future work.
Another potential direction for future work is to investigate Kolmogorov--Feller weak laws of large numbers for dependent random variables in the present non-compact symmetric setting, in
analogy with \cite{naderi2020weak}. 


\section{Examples}

We present examples where our result applies to i.i.d. sequences of random variables. 

\begin{example}[Symmetric positive-definite matrices]
Let \(M=\mathrm{Sym}^+(k)\), the space of $k \times k$ symmetric positive-definite matrices, be equipped with the affine-invariant Riemannian distance
\[
d_{\mbox{\tiny AI}}(A,B)=\|\log(A^{-1/2}BA^{-1/2})\|_{F}, \qquad (A,B \in M)
\]
where $\|\cdot\|_{F}$ stands for the Frobenius norm. Then $(M, d_{\mbox{\tiny AI}})$ is a non-compact symmetric space with non-positive curvature. At \(\mu=I_{k}\), the $k \times k$ identity matrix, the geodesic symmetry is given by $s_{\mu}(A)=A^{-1}$. Hence, a $k\times k$ positive-definite random matrix $X$ satisfying $X\stackrel{d}{=}X^{-1}$ is geodesically symmetric about $\mu$. In this setting, let $X, X_1, X_2, \ldots$ be a sequence of i.i.d. random variables. Theorem~\ref{thm:main}(ii) gives that for any $\mu_0 \in M$, $n\mathbb{P}(d_{\mbox{\tiny AI}}(X, \mu_0) > n) \underset{n \to \infty}{\to} 0$ implies $\mu_n(X_1, \ldots, X_n) \overset{\mathbb{P}}{\to} \mu$ as $n\to\infty$. 
\end{example}

\begin{example}[Product symmetric spaces]
Let $(M_j, d_j)_{j=1}^{K}$ be non-compact symmetric spaces and equip $M=M_1\times\cdots\times M_K$ with the product distance $$d(x,y)=\{\sum_{j=1}^K d_j^2(x_j,y_j)\}^{1/2}$$ for any $x=(x_1, \ldots, x_K)$ and $y=(y_1, \ldots, y_K)$. Then, $(M,d)$ is again a non-compact symmetric space. For a $\mu=(\mu_1,\ldots,\mu_K) \in M$, the geodesic symmetry about \(\mu\) is given by
\[
s_\mu(x_1,\ldots,x_K)
=
(s_{\mu_1}(x_1),\ldots,s_{\mu_{K}}(x_K)),
\]
where $s_{\mu_j}$ denotes the geodesic symmetry about $\mu_j \in M_j$. Let $\pi_j:M \to M_j, ~ 1\le j\le K$, be the natural projection map. Theorem~\ref{thm:main}(ii) applies to product-valued data whenever $X\stackrel{d}{=}s_\mu(X)$ and
$n\mathbb P(d(X,\mu)>n)\underset{n\to \infty}{\to} 0.$ The product-distance tail condition is equivalent to the coordinate-wise tail conditions $n\mathbb P\{d_{j}(\pi_{j}(X),\mu_j)>n\} \underset{n\to \infty}{\to} 0, \quad j=1, \ldots, K.$ Marginal geodesic symmetry alone does not imply joint geodesic symmetry in general, since the dependence structure may break the symmetry. However, if $\pi_1(X), \ldots, \pi_K(X)$ are independent, then $X\stackrel{d}{=}s_\mu(X)$ is equivalent to $\pi_j(X)\stackrel{d}{=}s_{\mu_{j}}(\pi_j(X))$ for all $1\le j\le K$. Thus, under independence, the geodesic symmetry condition can be verified coordinate-wise.  
\end{example}

We next present an illustrative example where our result applies to a non-i.i.d. sequence of random variables $(X_i)_{i=1}^{\infty}$ in $\mathrm{Sym}^+(k)$.  
In this example, although each $X_i$ has finite \Frechet\ variance $\sigma_i^2 := \inf_{p\in M} \mathbb{E}[d^2(X_i, p)]$, the sequence of variances $(\sigma_i^2)_{i=1}^\infty$ is unbounded. 


\begin{example} 
Consider a sequence $(X_i)_{i=1}^{\infty}$ of independent random variables taking values in $(\mathrm{Sym}^+(k),d_{\text{\tiny AI}})$ for some $k \ge 1$. Let $\mu = I_k$, the $k \times k$ identity matrix, be the center of symmetry. For a fixed $\alpha \in (0,1)$, each $X_i$ follows a log-Gaussian distribution:
$$\textsf{vec}\{\mbox{Log}_{\mu}(X_i)\} \sim N_{k(k+1)/2}(\textbf{0}, ~i^{\alpha}I_{k(k+1)/2}),$$ 
where $\textsf{vec}(\cdot)$ denotes the vectorization operator appropriate for symmetric $k \times k$ matrices \citep[][Section 3.5]{pennec2006riemannian}.  
The log-Gaussian distributions are geodesically symmetric about $\mu$, which is also the  \Frechet\ mean of each $X_i$, while the \Frechet\ variance $\sigma_i^2 = i^\alpha k(k+1)/2$ diverges as $i$ increases. 
In this setting, the conditions in Theorem~\ref{thm:main}(i) are satisfied (see Appendix C for details), and we conclude that $\mu_n(X_1,\ldots, X_n) \to \mu$ in probability as $n \to \infty$. 
\end{example}



For the case where only the first moment of $\mathbb{P}_{X}$ exists, the sample \Frechet\ mean almost surely converges to the minimizer of the \Frechet\ difference $F_{\mu}(m) = \mathbb{E}[d^2(X,m) - d^2(X,\mu)]$, which is well-defined for any $m \in M$ provided that $\mathbb{E}[d(X,\mu)]<\infty$ \citep{sturm2003probability,schotz2022strong}. 
We emphasize that Theorem~\ref{thm:main} relaxes this finite first moment condition. 
In the absence of moment assumptions, a symmetry condition serves to identify a center of the distribution. 
%
The following example demonstrates that
the tail condition in Theorem~\ref{thm:main} is genuinely weaker than the finite
first-moment condition.

\begin{example}
Suppose that $\mathbb{P}_{X}$ is a geodesically symmetric distribution about $\mu$ satisfying $$\mathbb{P}(d(X, \mu)>t) \propto \frac{1}{t\log(t)}$$ for large enough $t ~(\ge N)$. Then, it holds that $\lim_{n\to \infty}n\mathbb{P}(d(X, \mu)>n) = \lim_{n \to \infty}\frac{1}{\log(n)}= 0$. On the other hand, it is well-known that $\mathbb{E}[d(X, \mu)]=\infty$ is equivalent to $\sum_{n=N}^{\infty} \mathbb{P}(d(X, \mu) > n)= \sum_{n=N}^{\infty}\frac{c}{n\log(n)} =\infty$ for a fixed $c>0$, which follows from the integral test:
\begin{eqnarray*}
\sum_{n=N}^{\infty}\frac{1}{n\log(n)}=\infty ~ \Leftrightarrow ~ \int_{N}^{\infty}\frac{1}{x\log(x)}dx=\big[\log\{\log(x)\}\big]_{x=N}^{\infty} = \infty.
\end{eqnarray*}
Thus, by Theorem~\ref{thm:main}(ii), while the first moment does not exist for $\mathbb{P}_X$, its sample \Frechet\ mean $\mu_n$ converges to the center $\mu$ of symmetry in probability. 
\end{example}

\section*{Acknowledgement}  
We are grateful to the reviewers for their thoughtful and constructive comments. Their feedback led to improvements in the exposition, examples, and positioning of the results. Jongmin Lee was supported by the National Research Foundation of Korea (NRF) grants funded by the Korea government (MSIT) (RS-2026-25479875). Sungkyu Jung was supported by the National Research Foundation of Korea (NRF) grants funded by the Korea government (MSIT) (RS-2023-00301976 and RS-2024-00333399).




\begin{appendix}

\section{Proof of Proposition 1}
\begin{proof}[Proof of Proposition 1]
Suppose, for contradiction, that $\mu_1 \neq \mu_2$. Let $T = s_{\mu_2} \circ s_{\mu_1}$, and $\gamma$ be the geodesic from $\mu_1$ to $\mu_2$. Denote $\ell = 2d(\mu_1, \mu_2) > 0$. Then, $T$ is a translation-type isometry along $\gamma$, more precisely, a transvection. (For an illustration of the transvection, see Figure~\ref{fig:transvection}.) An application of Corollary 2.5 of \cite{sturm2003probability} implies that, since $M$ is Hadamard, we have for any $x \in M$, $d(x, s_{\mu_2}\circ s_{\mu_1}(x))\ge 2d(\mu_1, \mu_2)=\ell$.
%
%
This implies that, for any $x\in M$ and for any $m\in \mathbb{N}$, $d(x, T^m(x))\ge \ell m$. Thus, the isometry $T$, when repeatedly applied, pushes any point to infinity. 
\begin{figure}[htbp!]
\centering
\includegraphics[scale=0.45]{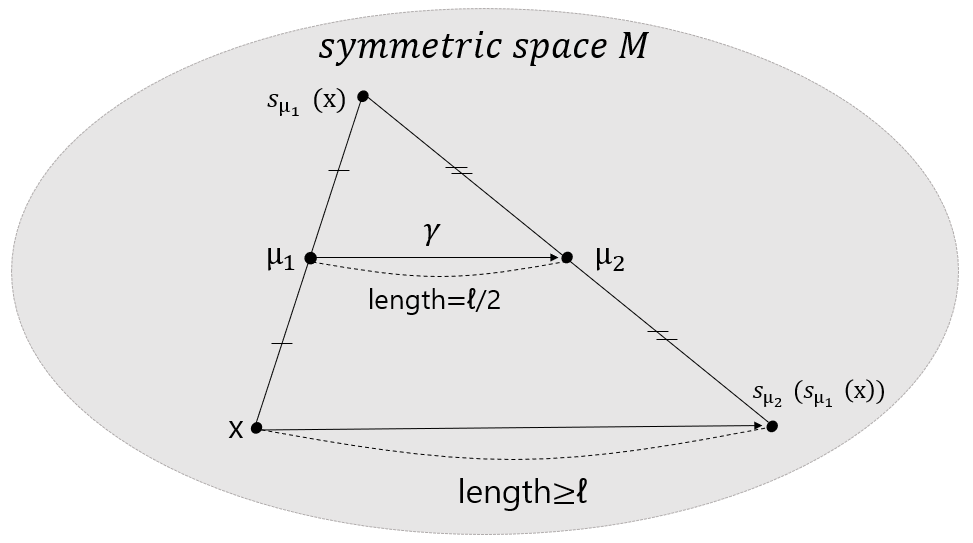}
\caption{An illustration of the transvection $T$, mapping from $x$ to $s_{\mu_2}\circ s_{\mu_1}(x)$.}
\label{fig:transvection}
\end{figure}

Given bounded sets $K$ and $A$, define a constant $C_{K, A}=\sup_{k\in K, a\in A}d(k, a) < \infty$. There exists an integer $m$ such that $\ell m> C_{K,A}$. For any $k \in K$ and for any $a \in A$, by the triangle inequality, 
$$
d(k, T^{m}(a)) \ge d(a, T^{m}(a)) - d(k,a) \ge \ell m - C_{K,A} >0,
$$
which implies that $K \cap T^{m}(A) =\emptyset$, where $T^{m}(A):=\{T^m(a): a \in A\}$. That is, for any bounded sets $K$ and $A$, there exists an integer $m_0$ such that for any integer $m'\ge m_0$, $K \cap T^{m'}(A) = \emptyset$, where for any $m\ge 1$, $T^{m}(A) \subset M$.

We now set the bounded sets $A$ and $K$ as follows. 
First, $A$ is chosen to be any bounded Borel set in $M$. Since $\mathbb{P}_X$ is geodesically symmetric about both $\mu_1$ and $\mu_2$, it holds that $\mathbb{P}_X(A) = \mathbb{P}_X(T(A))$.

Since $(M,d)$ is second countable, every probability measure on $M$ is tight \citep{lee2013smooth}. Hence, for any $\epsilon > 0$, there exists a compact set $K = K(\epsilon)$ satisfying $\mathbb{P}_X(K) \ge 1-\epsilon$. For all large $m$, we have $T^m(A) \cap K = \emptyset$, so that $\mathbb{P}(A) = \mathbb{P}(T^m(A)) \le 1 - \mathbb{P}(K) \le \epsilon$. 
Since $\epsilon$ is arbitrary, we have $\mathbb{P}(A) = 0$ for any bounded Borel set $A \subset M$. This is a contradiction to the tightness of $\mathbb{P}_{X}$, and therefore, $\mu_1 = \mu_2$.
\end{proof}

\section{Proofs of Theorem 1 and Lemma 1, and auxiliary results}
Our proof of Theorem 1 relies on several lemmas. We first state a result on the tail conditions imposed in Theorem 1. 

\begin{lemma}\label{lem:somevsall_tail_conditions} 
Let $(M,d)$ be a metric space, and let $(X_i)_{i=1}^\infty$ be a sequence of independent $M$-valued random variables. For an $m \in M$, let $c_1(m)$, $c_2(m)$ and $c_3(m)$ denote the following conditions, respectively: 
\begin{align*}
   c_1(m): & \lim_{n\to\infty} n\mathbb{P}(d(X_1,m) > n) = 0;\\
   c_2(m): & \lim_{n\to\infty} n^{-2}\sum_{i=1}^n\mathbb{E}[d^2\{\overline{X_{n,i}}(m),m\}] = 0;\\
   c_3(m): & \lim_{n\to\infty} \sum_{i=1}^n\mathbb{P}(d(X_i,m) > n) = 0.
\end{align*}
\begin{itemize}
    \item[(i)] For any $m,m' \in M$, $c_1(m)$ holds if and only if $c_1(m')$ holds.
    \item[(ii)] For any $m,m' \in M$, $c_2(m)$ holds if and only if $c_2(m')$ holds.
    \item[(iii)] Additionally assume that $M$ is a symmetric space and,  for a given $\mu \in M$, $\mathbb{P}_{X_i}$ is geodesically symmetric about $\mu$. Then, for any $m \in M$, $c_3(\mu)$ holds if $c_3(m)$ holds.
\end{itemize}
\end{lemma}

\begin{proof}[Proof of Lemma \ref{lem:somevsall_tail_conditions}]
(i) Let $m, m'$ be fixed, and without losing generality, assume that $R:= d(m,m') > 0$. We shall show that the condition $c_1(m)$ implies $c_1(m')$. The converse is obtained by the symmetry of the argument. 

Let $x \in M$ be any point. 
By the triangle inequality, 
\begin{equation}
   d(x,m') \le d(x, m) + d(m,m') = d(x, m) + R. \label{eq:triangle}
\end{equation} 
Therefore, for any $n > R$, $d(x,m') > n$ implies $d(x,m)> n - R$, which in turn implies that 
$$\{x \in M: d(x,m') > n\} \subset \{x \in M: d(x,m)> n - R\}.$$
Then, 
\begin{align*}
    n\mathbb{P}(d(X_1,m')>n) & \le n\mathbb{P}(d(X_1,m)>n-R) \\
      & = \frac{n}{n-R} (n-R) \mathbb{P}(d(X_1,m)>n-R).
\end{align*}
But, since we assumed $c_1(m)$, we have $(n-R) \mathbb{P}(d(X_1,m)>n-R) \to 0$ as $n\to\infty$, and $\lim_{n\to\infty}{n}/({n-R})= 1$ as $R$ is fixed. Thus, $n\mathbb{P}(d(X_1,m')>n) \to 0$ as $n\to\infty$, which is precisely the condition $c_1(m')$. 

(ii) Let $m, m'$ be fixed, and let $R:= \lceil d(m,m') \rceil > 0$ be the smallest non-zero integer larger than or equal to $d(m,m')$. As in Part (i), it suffices to show that $c_2(m)$ implies $c_2(m')$. 
Observe first that, for any $X_i,m \in M$, 
$$d^2(\overline{X_{n,i}}(m),m) = d^2(X_i,m) 1_{d(X_i,m) \le n},$$
which can be deduced from the definition of $\overline{X_{n,i}}(m)$. 

By the triangle inequality 
we have 
$$\{x \in M: d(x,m') \le n\} \subset \{x \in M: d(x,m)\le n + R\},$$ 
and, for any $x \in M$ 
$$d^2(x,m') \le \{d(x,m) + R\}^2 \le 2d^2(x,m) + 2R^2.$$
It follows that
\begin{align*}
    d^2(X_i,m') 1_{d(X_i,m') \le n} 
    & \le [2d^2(X_i,m) + 2R^2]1_{d(X_i,m') \le n} \\ 
    & \le [2d^2(X_i,m) + 2R^2]1_{d(X_i,m)\le n + R}.
\end{align*}

Write $F_m(n) : = n^{-2}\sum_{i=1}^n\mathbb{E}[d^2\{\overline{X_{n,i}}(m),m\}]$ for any $m\in M$ and $n\ge 1$. Then, 
\begin{align*}
    F_{m'}(n) & = n^{-2}\sum_{i=1}^n\mathbb{E} d^2(X_i,m') 1_{d(X_i,m') \le n}  \\
    & \le \frac{2}{n^2}\sum_{i=1}^n\mathbb{E} d^2(X_i,m)1_{d(X_i,m)\le n + R} + \frac{2R^2}{n^2}\sum_{i=1}^n \mathbb{P}(d(X_i,m)\le n + R) \\ 
    & \le \frac{2(n+R)^2}{n^2}F_{m}(n+R) + \frac{2R^2}{n^2}\sum_{i=1}^n 1.
\end{align*}
Since $F_{m}(n+R) \to 0$ as $n\to \infty$ (which is precisely the condition $c_2(m)$), $F_{m'}(n) \underset{n\to \infty}{\to} 0$ as desired.

(iii) Recall that $s_\mu(\cdot)$ is the geodesic symmetry with respect to $\mu \in M$. For any $x \in M$, we have 
$d(x, s_\mu(x)) = 2d(x,\mu) = 2d(s_\mu(x),\mu)$ (see Figure~\ref{fig:transvection} for an illustration). By the triangle inequality, for any $m\in M$, we obtain 
\begin{equation}
    \label{eq:iii} 
    2d(x,\mu) = d(x, s_\mu(x)) \le d(x,m ) + d(m, s_\mu(x)).
\end{equation}

Let us now fix $m$, and suppose that $c_3(m)$ holds. 
From (\ref{eq:iii}), we have 
\begin{align*}
\{x\in M: d(x,\mu) > n\} &\subset \{x\in M: d(x,m) + d(s_\mu(x), m) > 2n\}\\
&\subset \{x\in M: d(x,m) > n\}\cup \{x\in M: d(s_\mu(x),m) > n\}.
\end{align*}
Since $\mathbb{P}_{X_i}$ is geodesically symmetric about $\mu$, $X_i$ and $s_\mu (X_i)$ have the same distribution. Therefore, we have 
$$
\mathbb{P}(d(X_i,\mu) > n ) \le 2\mathbb{P}(d(X_i,m) > n).$$
Summing over $1\le i\le n$, we have 
$$
\sum_{i=1}^n\mathbb{P}(d(X_i,\mu) > n ) \le 2\sum_{i=1}^n\mathbb{P}(d(X_i,m) > n) \to 0,
$$
as $n\to \infty$. That is, $c_3(\mu)$ holds.
\end{proof}

The first two results of Lemma~\ref{lem:somevsall_tail_conditions} are valid for any metric space, and no symmetry condition is required. In contrast, the geodesic symmetry condition is required for Lemma~\ref{lem:somevsall_tail_conditions}(iii), and the converse is generally not true. That is, while $c_3(\mu)$ is true whenever $c_3(m)$ is true for some $m$, $c_3(m')$ may not be true even if $c_3(\mu)$ is true. A simple counterexample is the following: let $(M,d)$ be the usual real line with standard metric, and let $X_i = i$ with probability $1/2$ and $-i$ with probability $1/2$. The center of symmetry for $\mathbb{P}_{X_i}$ is $\mu = 0$. It can be checked that $\sum_{i=1}^n \mathbb{P}(d(X_i,\mu) > n) = \sum_{i=1}^n \mathbb{P}(|X_i - \mu| > n)= 0$ for all $n\ge 1$, but for $m' = 1$, $\sum_{i=1}^n \mathbb{P}(d(X_i,m') > n) = 1/2$ for all $n\ge 1$. 

Thanks to Lemma \ref{lem:somevsall_tail_conditions}, we shall show Theorem 1 under the following simplified condition:
\begin{enumerate}
  \item[(i)]  Condition (C) holds, $\sum_{i=1}^{n} \mathbb{P}(d(X_i, \mu) > n) \underset{n \to \infty}{\to} 0$ and \linebreak $n^{-2}\sum_{i=1}^{n}\mathbb{E}[d^2(\overline{X_{n,i}}, \mu)] \underset{n \to \infty}{\to} 0$;

  \item[(ii)] Condition (C) holds, the $X_i$'s are identically distributed, and it holds that $n\mathbb{P}(d(X_1, \mu) > n) \underset{n \to \infty}{\to} 0$.
\end{enumerate}

We next give a proof of Lemma 1. 

\begin{proof}[Proof of Lemma 1]
If $x=\mu_{n}$, equation (2) in the main paper is true since its left-hand side equals zero. Hence, we suppose that $x\neq \mu_n$. For a fixed $y \in M$ and for any geodesic $\gamma$ with same speed, define $f(t)=d^2(\gamma(t), y)$. Then, it is known that $f''(t) \ge 2\|\dot{\gamma}(t)\|^2_{\gamma(t)}$ when $M$ is a Hadamard manifold (see, e.g., \cite{do1992riemannian}). This inequality becomes strict when $M$ has negative curvatures everywhere and $y$ does not lie in $\gamma$. To employ the inequality, for each $i=1, 2, \ldots, n$, define $g_{i}(t) = d^2(\gamma(t), x_i)$ where $\gamma$ denotes the geodesic joining $x ~ (=\gamma(0))$ and $\mu_n ~(=\gamma(1))$ with same speed. Since $\gamma$ has the same speed, $\|\dot{\gamma}(t)\|_{\gamma(t)}=\|\mbox{Log}_{x}\mu_{n}\|_{x}$ for any $0\le t \le 1$. An application of mean-value theorem implies that for each $1\le i\le n$, $g'_i(1)-g'_i(0) \ge 2\|\mbox{Log}_{x}\mu_{n}\|^2_{x}$. The summation of the previous inequality from $i=1$ to $n$ gives
\begin{equation}\label{ineq:hadaineq}
\sum_{i=1}^{n} \{g'_{i}(1) - g'_{i}(0)\} \ge 2n\|\mbox{Log}_{x}(\mu_n) \|^2_{x}.
\end{equation}
For each $1\le i\le n$, differentiating each function $g_i(t)$, we obtain
\begin{align*}
g'_i(0)&= -2\langle\mbox{Log}_{x}(x_i),~ \dot{\gamma}(0) \rangle_{x}= -2\langle\mbox{Log}_{x}(x_i),~ \mbox{Log}_{x}(\mu_n)\rangle_{x}~ \mbox{and} \\
g'_{i}(1) &= -2\langle\mbox{Log}_{\mu_n}(x_i),~ \dot{\gamma}(1)\rangle_{\mu_n} = 2\langle\mbox{Log}_{\mu_n}(x_i),~ \mbox{Log}_{\mu_n}(x)\rangle_{\mu_n}. 
\end{align*}
In particular, we get $\sum_{i=1}^{n}g'_{i}(1)=\langle 2\sum_{i=1}^{n}\mbox{Log}_{\mu_n}(x_i),~ \mbox{Log}_{\mu_n}(x)\rangle_{\mu_n} = 0$, since $$\textsf{grad}|_{z=\mu_n}\{\sum_{i=1}^{n} d^2(z, x_i) \} = -2 \sum_{i=1}^{n} \mbox{Log}_{\mu_n}(x_i)=\textbf{0}.$$ With this in mind, according to (\ref{ineq:hadaineq}),
\begin{equation}\label{eq:log_inner}
-\sum_{i=1}^{n} g'_i(0)=2 \langle\sum_{i=1}^{n}\mbox{Log}_{x}(x_i),~ \mbox{Log}_{x}(\mu_{n}) \rangle_{x} \ge 2n\|\mbox{Log}_{x}(\mu_n) \|^2_{x}.
\end{equation}
Applying the Cauchy-Schwarz inequality, we obtain
\begin{equation}\label{eq:cs_ineq}
\|\sum_{i=1}^{n}\mbox{Log}_{x}(x_i) \|_{x} \cdot \|\mbox{Log}_{x}(\mu_n) \|_{x} \ge \langle \sum_{i=1}^{n}\mbox{Log}_{x}(x_i),~ \mbox{Log}_{x}(\mu_{n}) \rangle_{x}.
\end{equation}
Combining (\ref{eq:log_inner}) and (\ref{eq:cs_ineq}), we complete the proof. 

Note that equality in (\ref{eq:cs_ineq}) holds when $x,x_k,\mu_n$ are positioned on the same geodesic. In this regard, the inequality in equation (2) becomes strict when $M$ is a manifold with negative curvature and $x, x_{1}, x_{2}, \ldots, x_{n}$ are not located on a common geodesic.
\end{proof}

The following lemma serves as a step toward establishing the main results under Condition (C). 
\begin{lemma}\label{lem:modulation_ineq} 
Suppose that $(M,d)$ is a non-compact symmetric space and Condition (C) holds.
\begin{enumerate}
    \item[(a)] If, for each $1\le i\le n$, $\mathbb{P}_{X_{i}}$ has finite variance, i.e., $\mathbb{E}[d^2(X_{i}, \mu)] < \infty$, then 
     \begin{equation}\label{ineq:mod}
        \mathbb{E}[d^2(\mu_n, \mu)]\le \frac{1}{n^2}\sum_{i=1}^{n}\mathbb{E}[d^2(X_i, \mu)].
     \end{equation}

    \item[(b)] In particular, if $X_i$'s are identically distributed and $\mathbb{P}_{X}$ has finite variance, then
    \begin{equation}\label{ineq:modulation}
        \mathbb{E}[d^2(\mu_n, \mu)]\le \frac{1}{n}\mathbb{E}[d^2(X_1, \mu)].
    \end{equation}
    \end{enumerate}
\end{lemma}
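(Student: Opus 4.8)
\textbf{Proof plan for Lemma~\ref{lem:modulation_ineq}.}

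The plan is to derive (a) directly from Lemma~\ref{lem:Hada} by choosing the base point cleverly and then taking expectations, and to deduce (b) as the special case of identical distributions. The natural choice of base point in the inequality of Lemma~\ref{lem:Hada} is $x = \mu$, the common center of symmetry. With this choice, (\ref{ineq:key}) reads
\[
\|\mbox{Log}_{\mu}(\mu_n)\|_{\mu} \le \Big\|\frac{1}{n}\sum_{i=1}^{n}\mbox{Log}_{\mu}(X_i)\Big\|_{\mu},
\]
and since $M$ is Hadamard, $\mbox{Log}_\mu$ is a diffeomorphism onto $T_\mu M$ and $d(\mu_n, \mu) = \|\mbox{Log}_\mu(\mu_n)\|_\mu$ as well as $d(X_i, \mu) = \|\mbox{Log}_\mu(X_i)\|_\mu$. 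Squaring both sides and taking expectations gives
\[
\mathbb{E}[d^2(\mu_n, \mu)] \le \frac{1}{n^2}\,\mathbb{E}\Big\|\sum_{i=1}^{n}V_i\Big\|_{\mu}^2,
\]
where I write $V_i := \mbox{Log}_\mu(X_i) \in T_\mu M \cong \mathbb{R}^k$.

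The crux is then to show that the cross terms vanish, i.e. $\mathbb{E}[\langle V_i, V_j\rangle_\mu] = 0$ for $i \ne j$, so that $\mathbb{E}\|\sum_i V_i\|_\mu^2 = \sum_i \mathbb{E}\|V_i\|_\mu^2 = \sum_i \mathbb{E}[d^2(X_i,\mu)]$. By independence of $X_i$ and $X_j$ it suffices to show that each $V_i$ is centered, $\mathbb{E}[V_i] = \mathbf{0} \in T_\mu M$. This is exactly where geodesic symmetry about $\mu$ enters: the geodesic symmetry $s_\mu$ fixes $\mu$ and reverses geodesics through $\mu$, so in the chart given by $\mbox{Log}_\mu$ it acts as $v \mapsto -v$; more precisely, $\mbox{Log}_\mu(s_\mu(x)) = -\mbox{Log}_\mu(x)$. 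Since $\mathbb{P}_{X_i}$ is geodesically symmetric about $\mu$, $X_i$ and $s_\mu(X_i)$ have the same law, hence $V_i = \mbox{Log}_\mu(X_i)$ and $-V_i = \mbox{Log}_\mu(s_\mu(X_i))$ have the same law in $T_\mu M$; therefore $\mathbb{E}[V_i] = \mathbb{E}[-V_i]$, which forces $\mathbb{E}[V_i] = \mathbf{0}$ (the first moment exists because $\mathbb{E}\|V_i\|_\mu^2 = \mathbb{E}[d^2(X_i,\mu)] < \infty$ implies $\mathbb{E}\|V_i\|_\mu < \infty$ by Cauchy--Schwarz). This establishes (\ref{ineq:mod}).

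For part (b), under Condition (C2) all $X_i$ share the distribution $\mathbb{P}_X$, so $\mathbb{E}[d^2(X_i,\mu)] = \mathbb{E}[d^2(X_1,\mu)]$ for every $i$, and the right-hand side of (\ref{ineq:mod}) collapses to $\frac{1}{n^2}\cdot n\,\mathbb{E}[d^2(X_1,\mu)] = \frac{1}{n}\mathbb{E}[d^2(X_1,\mu)]$, which is (\ref{ineq:modulation}). I expect the main obstacle to be the careful justification that $\mbox{Log}_\mu \circ s_\mu = -\mbox{Log}_\mu$ on all of $M$ — this uses that on a Hadamard (hence non-compact symmetric) space $\mbox{Exp}_\mu$ and $\mbox{Log}_\mu$ are globally defined mutual inverses, so the formula $s_\mu(x) = \mbox{Exp}_\mu\{-\mbox{Log}_\mu(x)\}$ from the definition of symmetric space holds without the "provided it is well-defined" caveat — together with confirming that the finite-variance hypothesis indeed supplies the integrability needed to talk about $\mathbb{E}[V_i]$ and to split the square of the sum. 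Everything else is routine once centering is in hand.
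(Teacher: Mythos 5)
Your proposal is correct and follows essentially the same route as the paper: apply Lemma~\ref{lem:Hada} at $x=\mu$, square, take expectations, and kill the cross terms via independence and the centering $\mathbb{E}[\mbox{Log}_\mu(X_i)]=\mathbf{0}$. The only (cosmetic) difference is that you derive the centering directly from $\mbox{Log}_\mu\circ s_\mu=-\mbox{Log}_\mu$ and $X_i\overset{d}{=}s_\mu(X_i)$, whereas the paper invokes a cited result that the population \Frechet\ mean equals $\mu$; your version is self-contained and equally valid.
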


\begin{proof}[Proof of Lemma~\ref{lem:modulation_ineq}]
(a) Since $(M,d)$ is a non-compact symmetric space, it is simply connected (see, e.g., \cite{helgason1979differential}). Thus, $M$ is a Hadamard manifold. Hence, we can employ the result of Lemma 1, which is presented in the main paper. Notably, the population \Frechet\ mean with respect to $\mathbb{P}_{X}$ is unique and equals $\mu$ since $M$ is a non-compact symmetric space (for details, see Proposition 2(a) in \cite{lee2026huber}). Thus, for each $i=1, 2, \ldots, n$, it holds that $\mathbb{E}[\mbox{Log}_{\mu}(X_i)] = \textbf{0} \in T_{\mu}M ~(\cong\mathbb{R}^{k})$ (since each $\mathbb{P}_{X_i}$ has finite variance). Substituting $x=\mu$, and $x_i=X_i$ for any $i=1,2,\ldots n$ into equation (2), we get
$$
\|\mbox{Log}_{\mu}(\mu_n) \|_{\mu} \le \|\frac{1}{n}\sum_{i=1}^{n} \mbox{Log}_{\mu}(X_i) \|_{\mu}.
$$
Squaring both sides of the above inequality and taking the expectation, we obtain the desired result.

(b) Since $X_{1}, X_2, \ldots, X_n$ has same distribution $\mathbb{P}_{X}$ by assumption, for any $1\le i\le n$, it holds that $\mathbb{E}[d^2(\mu, X_1)]=\mathbb{E}[d^2(\mu, X_i)]$, yielding (\ref{ineq:modulation}).

Notably, the inequality in (\ref{ineq:modulation}) becomes strict when $(M,d)$ has negative curvatures everywhere and $\mathbb{P}_X$ is not supported entirely on any single geodesic.
\end{proof}

 \begin{remark} 
         Suppose that Condition (C) holds, $X_i$ are identically distributed, and that $\mathbb{P}_{X}$ has finite variance. Then, (\ref{ineq:modulation}) implies that the variance modulation of \Frechet\ mean (see, for example, \cite{pennec2019curvature} and eq. (1.5) in \cite{hundrieser2024finite} for details), denoted by $\textsf{m}_{n} \ge 0$,
         is not greater than 1; that is, 
 \[
 \textsf{m}_{n}^2:=\frac{n\mathbb{E}[d^2(\mu_n, \mu)]}{\mathbb{E}[d^2(X, \mu)]} \le 1
 \]
 where $\textsf{m}_n < 1$ holds when $M$ has negative curvatures everywhere and $\mathbb{P}_{X}$ is not supported entirely on any single geodesic. The strict inequality $\textsf{m}_n < 1$ means that the rate of convergence for \Frechet\ mean is faster than the Euclidean counterpart. This result is more general than the existing one \citep[][Theorem 9]{pennec2019curvature} (when $M$ has constant negative curvatures), as it applies to a broader class of manifolds $M$ and underlying distributions $\mathbb{P}_{X}$ on $M$.
 \label{rmk:modulation}
 \end{remark}

\begin{lemma}\label{lem:moment}
For a real-valued random variable $Y$ and a fixed $p > 0$, 
$$
\mathbb{E}|Y|^{p}=\int_{0}^{\infty} pt^{p-1}\mathbb{P}(|Y|>t)dt.
$$
\end{lemma}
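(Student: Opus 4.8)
The plan is to prove this distribution-function (layer-cake) identity directly via Tonelli's theorem, without assuming the finiteness of any moment: both sides are allowed to equal $+\infty$ simultaneously, so I would not presuppose $\mathbb{E}|Y|^p<\infty$. The starting point is the pointwise identity $\mathbb{P}(|Y|^p>t)=\mathbb{E}[1_{\{|Y|^p>t\}}]$, valid for every fixed $t>0$, which rewrites the right-hand side as the iterated integral $\int_{0}^{\infty}pt^{p-1}\,\mathbb{E}[1_{\{|Y|^p>t\}}]\,dt$ of a manifestly nonnegative integrand.

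The central step is to interchange the Lebesgue integral over $t\in(0,\infty)$ with the expectation. Since the integrand $(t,\omega)\mapsto pt^{p-1}1_{\{|Y(\omega)|^p>t\}}$ is nonnegative and jointly measurable on $(0,\infty)\times\Omega$ --- the joint measurability holding because $\{(t,\omega):|Y(\omega)|^p>t\}$ is the region lying below the graph of the measurable map $\omega\mapsto|Y(\omega)|^p$ --- Tonelli's theorem applies unconditionally, requiring neither a dominating function nor any integrability hypothesis. This yields $\int_0^\infty pt^{p-1}\mathbb{P}(|Y|^p>t)\,dt=\mathbb{E}\big[\int_0^\infty pt^{p-1}1_{\{|Y|^p>t\}}\,dt\big]$, with a deterministic inner integral inside the expectation.

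It then remains to evaluate the inner, pathwise integral. For each fixed $\omega$ the indicator $1_{\{|Y|^p>t\}}$ vanishes once $t$ exceeds the random threshold, so the inner integral reduces to an ordinary Riemann integral of the weight $pt^{p-1}$ over a finite interval; since $t^p$ is an antiderivative of $pt^{p-1}$, the fundamental theorem of calculus evaluates it to $|Y|^p$. Substituting back and using that the expectation is the integral over $\Omega$ recovers $\mathbb{E}|Y|^p$, the left-hand side.

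I expect the only genuine obstacle to lie in the measure-theoretic bookkeeping of the interchange --- specifically, confirming the joint measurability of the indicator so that Tonelli is legitimately applicable and the two orders of integration agree even when the common value is infinite --- whereas the evaluation of the inner integral is a routine application of the fundamental theorem of calculus. I would therefore spend most of the write-up making the Tonelli hypotheses explicit and leave the final antiderivative computation as a one-line remark.
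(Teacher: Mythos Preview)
Your Tonelli/layer-cake strategy is the right idea, but the final evaluation step does not actually land on $|Y|^p$. With the indicator $1_{\{|Y|^p>t\}}$ the inner integral is
\[
\int_{0}^{|Y|^p} p\,t^{p-1}\,dt \;=\; \bigl[t^{p}\bigr]_{0}^{|Y|^p} \;=\; |Y|^{p^2},
\]
so your argument, carried out exactly as written, yields $\mathbb{E}|Y|^{p^2}$ rather than $\mathbb{E}|Y|^{p}$. The discrepancy is not in your method but in the displayed formula of the lemma itself: the tail probability should read $\mathbb{P}(|Y|>t)$, not $\mathbb{P}(|Y|^p>t)$. This is confirmed by how the paper actually \emph{uses} the lemma (for $p=2$ it invokes $\mathbb{E}Y^2=\int_0^\infty 2t\,\mathbb{P}(|Y|>t)\,dt$), and by the paper's own derivation, which passes through $\mathbb{P}(|Y|^p>t^p)=\mathbb{P}(|Y|>t)$ before a typographical slip reinserts the exponent. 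Once you replace the indicator by $1_{\{|Y|>t\}}$, your inner integral becomes $\int_0^{|Y|}pt^{p-1}\,dt=|Y|^p$ and the argument closes correctly.

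With that correction, your approach is essentially the same as the paper's: the paper starts from the tail identity $\mathbb{E}Z=\int_0^\infty\mathbb{P}(Z>u)\,du$ applied to $Z=|Y|^p$ (which is itself the Tonelli computation you spell out) and then performs the change of variable $u=t^p$, whereas you embed the weight $pt^{p-1}$ from the outset and swap integrals directly. The two routes are equivalent; yours has the minor advantage of making the Tonelli hypothesis explicit, while the paper's substitution makes the role of the exponent $p$ more transparent.
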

\begin{proof}[Proof of Lemma~\ref{lem:moment}]
For any non-negative random variable $Z$, it holds that $\mathbb{E}Z=\int_{0}^{\infty} \mathbb{P}(Z>t)dt$. By plugging-in $Z=|Y|^{p-1}$ into the previous equality,
\begin{align*}
\mathbb{E}|Y|^{p} &= \int_{0}^{\infty}\mathbb{P}(|Y|^p>u)du = \int_{0}^{\infty}\mathbb{P}(|Y|^p>t^p)pt^{p-1}dt \\
&=\int_{0}^{\infty} pt^{p-1}\mathbb{P}(|Y|>t)dt,
\end{align*}
where the second equality holds due to the integration by substitution for $u=t^{p}$.
\end{proof}
We are now ready to prove Theorem 1.

\begin{proof}[Proof of Theorem 1]

\textbf{Case I.} We begin by showing that if (i) in the statement of Theorem 1 holds, then $\mu_n \overset{\mathbb{P}}{\to} \mu$ as $n \to \infty$. Denote the sample \Frechet\ mean of $(\overline{X_{n, i}})_{1\le i\le n}$ by $\overline{\mu_{n}}$. Note that, for each $1\le i\le n$, the distribution of $\overline{X_{n, i}}$ is geodesically symmetric about $\mu$ and has finite variance. For a given $\epsilon > 0$, an application of Lemma~\ref{lem:modulation_ineq} implies that
\begin{eqnarray*}
\mathbb{P}(d(\mu_n, \mu ) > \epsilon) &\le& \mathbb{P}(\mu_n \neq \overline{\mu_n}) + \mathbb{P}(\mu_n=\overline{\mu_n}, ~ d(\mu_{n}, \mu) > \epsilon) \\
&\le& \mathbb{P}(\overline{X_{n, i}} \neq X_i ~ \mbox{for some} ~ i) + \mathbb{P}(d(\overline{\mu_n}, \mu) > \epsilon) \\
&\le& \sum_{i=1}^{n}\mathbb{P}(d(X_i, \mu) > n) + \mathbb{P}(d(\overline{\mu_n}, \mu) > \epsilon). \\
&=& \sum_{i=1}^{n}\mathbb{P}(d(X_i, \mu) > n) + \mathbb{E}1_{d(\overline{\mu_n}, \mu) > \epsilon} \\
&\le& \sum_{i=1}^{n}\mathbb{P}(d(X_i, \mu) > n) + \mathbb{E}\big[\frac{d(\overline{\mu_{n}}, \mu)}{\epsilon}\big]^2 \\
&\overset{(\because ~ Lemma~\ref{lem:modulation_ineq}(a))}{\le}& \sum_{i=1}^{n}\mathbb{P}(d(X_i, \mu) > n) + \frac{1}{n^2\epsilon^2}\sum_{i=1}^{n}\mathbb{E}[d^2(\overline{X_{n, i}}, \mu)] \\
&\underset{n \to \infty}{\to}& 0,
\end{eqnarray*}
where the last holds due to the hypothesis of Theorem 1 for the case (i). Therefore, $\mu_n$ converges in $\mu$ in probability as $n\to \infty$, as asserted.

\textbf{Case II.} We next prove that if the hypothesis of Theorem 1 for the case (ii) holds, then $\mu_n \overset{\mathbb{P}}{\to} \mu$ as $n \to \infty$. By assumption, for each $n\ge 1$, $X_{1}, X_{2}, \ldots X_{n}$ have a common distribution, and $\overline{X_{n,1}}, \overline{X_{n,2}}, \ldots, \overline{X_{n,n}}$ are also identically distributed. Due to \textbf{Case I}, it is enough to show that $\sum_{i=1}^{n} \mathbb{P}(d(X_i, \mu) > n) \underset{n \to \infty}{\to} 0$ and $n^{-2}\sum_{i=1}^{n}\mathbb{E}[d^2(\overline{X_{n, i}}, ~ \mu)] \underset{n \to \infty}{\to} 0$ are satisfied. 

It directly follows that $\sum_{i=1}^{n} \mathbb{P}(d(X_i, \mu) > n) = n \mathbb{P}(d(X_1, \mu) > n)\underset{n \to \infty}{\to} 0$. An application of Lemma~\ref{lem:moment} for $p=2$ implies that for any real-valued random variable $Y$, $\mathbb{E}(Y^2)=\int_{0}^{\infty} 2t\,\mathbb{P}(|Y|>t)dt$. With this in mind, observe that
\begin{align}\label{eq:int}
n^{-2}\sum_{k=1}^{n}\mathbb{E}[d^2(\overline{X_{n,k}},\mu)] &= n^{-1}\mathbb{E}[d^2(\overline{X_{n,1}}, \mu)] \nonumber \\
&=n^{-1} \int_{0}^{\infty} 2t\, \mathbb{P}(d(\overline{X_{n,1}}, \mu) > t)dt \nonumber \\
&\le 2n^{-1}\int_{0}^{n}t\, \mathbb{P}(d(X_1, \mu) > t)dt 
\end{align}
The remainder of this proof is to verify that (\ref{eq:int}) goes to zero as $n \to \infty$. To do this, note that for any $\epsilon > 0$, there exists $T(\epsilon)>0$ such that $\mbox{sup}_{t\ge T(\epsilon)} t\mathbb{P}(d(X_1, \mu) > t) \le \epsilon$ (due to the assumption $n\mathbb{P}(d(X_1, \mu) > n) \underset{n \to \infty}{\to} 0$). Hence, for a fixed $\epsilon > 0$,
\begin{align*}
n^{-1}\int_{0}^{n} t\mathbb{P}(d(X_1, \mu) > t)dt =& \underbrace{n^{-1}\int_{0}^{T(\epsilon)} t\mathbb{P}(d(X_1, \mu) > t)dt}_{\le T(\epsilon)/n} \\ 
&+ \underbrace{n^{-1}\int_{T(\epsilon)}^{n} t\mathbb{P}(d(X_1, \mu) > t)dt}_{\le \epsilon}. 
\end{align*}
Taking $\limsup_{n\to \infty}$ on both sides above, we get 
\[
\limsup_{n\to\infty}n^{-1}\int_{0}^{n} t\mathbb{P}(d(X_1, \mu) > t)dt \le \epsilon.  
\]
Since $\epsilon$ is arbitrary, letting $\epsilon \to 0$ gives $\lim_{n\to\infty}n^{-1}\int_{0}^{n} t\mathbb{P}(d(X_1, \mu) > t)dt = 0$. Therefore, (\ref{eq:int}) goes to zero as $n\to\infty$, as desired.
\end{proof}

\section{Technical details for Example 3}

%
Recall that for each $i\ge 1$, $\textsf{vec}\{\mbox{Log}_{\mu}(X_i)\} \sim N_{k(k+1)/2}(\textbf{0}, ~i^{\alpha}I_{k(k+1)/2})$. 
The Chernoff bound implies that for any $t \in (0, 1/2)$ and for any $x>0$, $\mathbb{P}(\chi^2_{k(k+1)/2} > x) \le e^{-tx}(1-2t)^{-k(k+1)/4}$, where $\chi^2_{k(k+1)/2}$ stands for the chi-squared distribution with degrees of freedom $k(k+1)/2$. In particular, setting $t=1/4$ gives that for any $x > 0$, $\mathbb{P}(\chi^2_{k(k+1)/2} > x) \le 2^{k(k+1)/4}e^{-x/4}$. With this in mind, we get
\begin{align*}
\sum_{i=1}^{n}\mathbb{P}(d(X_i, \mu) > n)&= \sum_{i=1}^{n}\mathbb{P}(\|\mbox{Log}_{\mu}(X_i)\|_{\mu}^2 > n^2) \\
&= \sum_{i=1}^{n}\mathbb{P}(\|\textsf{vec}\{\mbox{Log}_{\mu}(X_i)\}\|_{2}^2 > n^2) \\
&= \sum_{i=1}^{n}\mathbb{P}(\chi^2_{k(k+1)/2}>n^2/i^\alpha) \\
&\le 2^{k(k+1)/4}\sum_{i=1}^{n}\exp(-\{n^2/(4i^\alpha)\}) \\
& < 2^{k(k+1)/4}n\,\exp(-n^{(2-\alpha)}/4) \\
& \underset{n \to \infty}{\to} 0,
\end{align*}
where $\|\cdot \|_{2}$ denotes the $L_2$-norm in $\mathbb{R}^{k(k+1)/2}$. In addition, 
\begin{align*}
n^{-2}\sum_{i=1}^{n}\mathbb{E}[d^2(\overline{X_{n,i}},\mu)] 
&\le n^{-2}\sum_{i=1}^{n}\mathbb{E}[d^2(X_i,\mu)] = n^{-2}\sum_{i=1}^{n}\mathbb{E}[\|\mbox{Log}_{\mu}(X_i)\|_{\mu}^2] \\
&=n^{-2}\sum_{i=1}^{n}\textsf{tr}[\textsf{Cov}\{\textsf{vec}(\mbox{Log}_{\mu}(X_i))\}] = \frac{k(k+1)}{2}n^{-2}\sum_{i=1}^{n}i^\alpha \\
&\le \frac{k(k+1)}{2} n^{-2}\int_{1}^{n+1}x^\alpha dx \\
&= \frac{k(k+1)}{2}n^{-2}\{(n+1)^{\alpha+1} -1\}/(\alpha+1) \\
&< \frac{k(k+1)}{2}n^{\alpha-1}(1+ 1/n)^{\alpha + 1}/(\alpha+1) \\
&\underset{n \to \infty}{\to} 0.
\end{align*}
Since the hypotheses of Theorem 1(i) hold, $\mu_n(X_1, X_2, \ldots , X_n) \overset{\mathbb{P}}{\to} \mu$ as $n\to \infty$ by Theorem 1(i).

\section{A partial converse for Theorem 1(ii) and its proof.}

An interesting question is whether the converse of Theorem 1 for case (ii) holds. This concerns the extent to which the Kolmogorov--Feller tail condition is tight for the weak law. In the Euclidean space the following result is classical; it follows from
an application of Corollary 3.3 in \cite{marcus1979stable}. 
For completeness, we include a proof in this section. 

\begin{theorem}\label{thm:main2}
 Suppose that $M=\mathbb{R}^k$ is endowed with an arbitrary inner product, and that Condition (C) holds and the $X_i$ are identically distributed. If $\mu_n \overset{\mathbb{P}}{\to} \mu$ as $n \to \infty$, then $n\mathbb{P}(\|X_1-\mu\| > n) \underset{n \to \infty}{\to} 0$.
\end{theorem}
Since a noncompact symmetric space with identically zero sectional curvature is isometric to a Euclidean space, the result established in the Euclidean setting applies verbatim to this flat symmetric-space case. 
We conjecture that the result of Theorem \ref{thm:main2} does not hold for general non-compact symmetric spaces.

To verify Theorem~\ref{thm:main2}, we will use the following lemma regarding a symmetric distribution on the real line. 
\begin{lemma}\label{lem:same_dist}
Given a real-valued random variable $Y$, suppose that $Y$ and $-Y$ have the same distribution; that is, $Y\overset{d}{=}-Y$. Then, $\mathbb{P}(Y\ge 0) \ge 1/2$. 
\end{lemma}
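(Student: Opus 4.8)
The plan is to exploit the symmetry $Y \overset{d}{=} -Y$ to relate the probability of the event $\{Y \ge 0\}$ to that of $\{Y \le 0\}$, and then combine the two using the fact that these events cover the whole sample space. First I would observe that, because $Y$ and $-Y$ have the same distribution and $\{y : y \ge 0\}$ and $\{y : y \le 0\}$ are mirror images of each other under $y \mapsto -y$, we have $\mathbb{P}(Y \ge 0) = \mathbb{P}(-Y \ge 0) = \mathbb{P}(Y \le 0)$.

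Next I would use the elementary inclusion $\{Y \ge 0\} \cup \{Y \le 0\} = \Omega$, which gives by subadditivity
\[
1 = \mathbb{P}(\{Y \ge 0\} \cup \{Y \le 0\}) \le \mathbb{P}(Y \ge 0) + \mathbb{P}(Y \le 0).
\]
Substituting the identity from the first step, the right-hand side equals $2\,\mathbb{P}(Y \ge 0)$, so $\mathbb{P}(Y \ge 0) \ge 1/2$, which is the claim. (Equivalently, one can write $\mathbb{P}(Y \ge 0) + \mathbb{P}(Y \le 0) = 1 + \mathbb{P}(Y = 0)$ and conclude $\mathbb{P}(Y \ge 0) = \tfrac12 + \tfrac12\mathbb{P}(Y=0) \ge \tfrac12$, which also makes transparent why the inequality can be strict.)

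There is essentially no obstacle here; the only point requiring a moment's care is that one should not claim $\mathbb{P}(Y \ge 0) = 1/2$ in general, since a possible atom of $Y$ at the origin contributes to both $\{Y \ge 0\}$ and $\{Y \le 0\}$. Using the non-strict events $\{Y \ge 0\}$ and $\{Y \le 0\}$ (rather than, say, $\{Y > 0\}$ and $\{Y < 0\}$) is exactly what makes the covering-of-$\Omega$ step go through cleanly and yields the stated inequality.
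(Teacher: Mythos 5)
Your argument is correct and is essentially identical to the paper's: both use $Y\overset{d}{=}-Y$ to identify $\mathbb{P}(Y\le 0)$ with $\mathbb{P}(Y\ge 0)$ and then apply subadditivity to $\{Y\ge 0\}\cup\{Y\le 0\}=\Omega$. The parenthetical remark about the atom at $0$ is a nice additional observation but not needed for the claim.
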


\begin{proof}[Proof of Lemma~\ref{lem:same_dist}]
Since $Y$ and $-Y$ have the same distribution, an application of countable sub-additivity of probability measures yields that 
\begin{align*}
1=\mathbb{P}(Y\in \mathbb{R}) &\le \mathbb{P}(Y \ge 0) + \mathbb{P}(Y\le 0) \\
&= \mathbb{P}(Y \ge 0) + \mathbb{P}(-Y\le 0) \\
&\le 2\mathbb{P}(Y\ge 0).
\end{align*}
In turn, this leads to the desired result.
\end{proof}

\begin{proof}[Proof of Theorem~\ref{thm:main2}]
When $k=1$, it is well-known that the statement of Theorem~\ref{thm:main2} holds; see, for example, \cite{stoica2010kolmogorov, yuan2015conditional, naderi2019version}. For this reason, suppose that $M$ is equal to $\mathbb{R}^k$, $k\ge 2$, endowed with an arbitrary inner product $\langle~, ~\rangle$ and the induced norm $\|\mathbf{x}\|:=\langle \mathbf{x}, \mathbf{x}\rangle^{1/2}$ for $\mathbf{x}\in \mathbb{R}^k$.

Without loss of generality, we may assume that $\mu=\textbf{0}\in \mathbb{R}^k$. (If not, for any $1\le i\le n$, replace $X_i$ with $X_i - \mu$.) Suppose, for contradiction, that there exists a $\delta > 0$ and an increasing subsequence of natural numbers $(n_{j})_{j\ge 1}$ such that for any $j\ge 1$, $n_j\mathbb{P}(\|X_1\| > n_j) \ge \delta$. Notably, $\mathbb{P}(\|X_1\|>n)$ is close to zero for large enough $n$ and an inequality $(1-p)^n \le e^{-np}$ holds for small $p\ge0$. With these in mind, for large enough $j$
\begin{align}\label{ineq:strict}
\mathbb{P}(\exists 1 \le i\le n_{j}: \|X_i\|> n_{j}) &= 1- \mathbb{P}(\max_{1\le i\le n_{j}} \|X_i\| \le n_{j}) \nonumber \\
&= 1-\{\mathbb{P}(\|X_1\| \le n_{j})\}^{n_{j}} \nonumber \\
&= [1-\{1-\mathbb{P}(\| X_1\|>n_{j})\}]^{n_{j}} \nonumber \\
&\ge 1-e^{-n_{j}\mathbb{P}(\|X_1 \|>n_{j})} \nonumber \\
&\ge 1-e^{-\delta} ~(>0).
\end{align}
Let $j$ be a fixed large natural number and denote $S_{n_{j}}=\sum_{\ell=1}^{n_{j}}X_\ell$ and $S_{-i} = S_{n_{j}} - X_i ~ (1\le i\le n_j)$. An application of (\ref{ineq:strict}) yields that
\begin{align}\label{ineq:strict2}
\mathbb{P}(\|S_{n_j}/n_{j}\|>1) \ge& \mathbb{P}(\exists1\le i\le n_j: \|X_i\|>n_j ~ \mbox{and} ~ \langle X_i, S_{-i} \rangle \ge 0) \nonumber \\
=& \mathbb{P}(\langle X_i, S_{-i}\rangle \ge 0  \mid \exists1\le i\le n_j: \|X_i\|>n_j) \nonumber \\
& \times \mathbb{P}(\exists1\le i\le n_j: \|X_i\|>n_j) \nonumber \\
\ge& 1/2 \times \mathbb{P}(\exists 1\le i\le n_j: \|X_i\|>n_j) \nonumber \\
\ge& (1-e^{-\delta})/2 ~( > 0),
\end{align}
where the first inequality holds due to the inequality below:
$$
\|S_{n_{j}}\|^2=\langle X_i + S_{-i}, X_i+S_{-i} \rangle =\|X_i\|^2 + 2\langle X_i, S_{-i} \rangle +\|S_{-i} \|^2 > n_j^2.
$$
Moreover, the second inequality of (\ref{ineq:strict2}) is satisfied due to an application of Lemma~\ref{lem:same_dist} to the fact that $\langle X_i, S_{-i}\rangle \overset{d}{=} \langle X_i, -S_{-i} \rangle$ (recall that $\mathbb{P}_{X}$ is geodesically symmetric about $\mathbf{0}$, thereby implying $S_{-i}\overset{d}{=}-S_{-i}$). Since $\mu_n=S_n/n$ and (\ref{ineq:strict2}) holds for any sufficiently large $j$, $\mu_n$ does not converge to $\mu ~(=\mathbf{0})$ in probability as $n\to \infty$. This contradicts the assumption.
\end{proof}

\section{Sharpness of Theorem 1(ii)} 

The Kolmogorov--Feller tail condition, namely $n\mathbb{P}(d(X_1, \mu_0) > n) \to 0$, implies that $\mathbb{E}[d^{\alpha}(X, \mu)] <\infty$ for all $\alpha \in (0, 1)$. Therefore, under the same conditions as in Theorem 1(ii), the $\alpha$-\Frechet\ mean 
$$\mu_n^{\alpha}:=\arg\min_{m \in M} n^{-1}\sum_{i=1}^{n}d^\alpha(X_i, m)$$ 
for any $\alpha \in (0,1)$, converges to $\mu$ almost surely as $n$ increases \cite[][Corollary 5.2]{schotz2022strong}. Nevertheless, Theorem 1(ii) is sharp in the sense that, under the same Kolmogorov--Feller tail condition, the convergence in probability established in Theorem 1 is not, in general, strengthened to almost sure convergence. The following example demonstrates this.

\begin{example}[Failure of almost sure convergence]
Let $(M, d)$ be a non-compact symmetric space, fix $\mu \in M$, and let $\gamma:\mathbb{R}\to M$ be a unit-speed geodesic with $\gamma(0)=\mu$. Let $Z$ be a symmetric real-valued random variable satisfying
$$
\mathbb{P}(Z = \pm 2^i) = \frac{1}{(2i)2^i} \quad (i \ge 1), \quad \mbox{and} \quad \mathbb{P}(Z=0) = 1 - \sum_{i=1}^\infty \frac{1}{i(2^i)}.
$$
Define the $M$-valued random variable $X= \gamma(Z)$, and let $X, X_1, X_2, \ldots$ be independent and identically distributed random variables. Then, $\mathbb{P}_X$ is geodesically symmetric about $\mu$. Since $\gamma$ is unit-speed, $d(X,\mu) = |Z|$. The Kolmogorov--Feller condition is satisfied:
$
n\mathbb{P}(d(X,\mu)>n) = n\mathbb{P}(|Z|>n) \to 0
$
as $n\to \infty$, which in turn implies that $\mu_n \to \mu$ in probability. On the other hand, since all observations lie on the geodesic $\gamma(\mathbb{R})$, the sample Fréchet mean $\mu_n=\mu_n(X_1, X_2, \ldots, X_n)$ coincides with the Euclidean sample mean along that geodesic, that is,
$$
\mu_n = \gamma(\bar Z_n),
\qquad
\bar Z_n := \frac{1}{n}\sum_{i=1}^n Z_i,
$$
where $Z, Z_1, Z_2, \ldots$ are independent and identically distributed random variables. However, $\bar Z_n \not\to 0$ almost surely. Thus,
$$
\mu_n \not\to \mu \qquad \text{almost surely}.
$$
Therefore, the conditions in Theorem 1(ii) do not, in general, guarantee almost sure convergence.
\end{example}
Technical details for Example E.1 are described below. For any integer $i\ge 1$,
\begin{align*}
2^i \mathbb{P}(|Z| > 2^i) &= \sum_{m=1}^{\infty} \frac{1}{i+m}\cdot\frac{1}{2^m} \\
&< \frac{1}{i+1} (\sum_{m=1}^{\infty} \frac{1}{2^m}) \\
&= \frac{1}{i+1} \\
&\underset{i \to \infty}{\to} 0.
\end{align*}
For each $n\ge 2$, there exists a unique integer $i\ge 1$ such that $2^i \le n <2^{i+1}$. Then,
$$
n\mathbb{P}(|Z| > n) <2^{i+1}\mathbb{P}(|Z|>2^i) = 2 \cdot 2^{i}\mathbb{P}(|Z|>2^i). 
$$
As $n$ increases, so does $i$. Combining these facts, we have
$$
n\mathbb{P}(d(X,\mu)>n) = n\mathbb{P}(|Z|>n) \underset{n\to \infty}{\to} 0.
$$
So the Kolmogorov--Feller tail condition is fulfilled. 
Also, recall that 
$$
\mu_n = \gamma(\bar Z_n),
\qquad
\bar Z_n := \frac{1}{n}\sum_{i=1}^n Z_i,
$$
where $Z, Z_1, Z_2, \ldots$ are independent and identically distributed random variables. 

Now, we wish to verify that $\sum_{n=1}^{\infty}\mathbb{P}(|Z_n| > n)=\infty$. By Cauchy's condensation test, 
$$
\sum_{n=1}^{\infty}\mathbb{P}(|Z_n| > n)= \sum_{n=1}^{\infty}\mathbb{P}(|Z| > n)=\infty \quad \Leftrightarrow \quad \sum_{n=1}^{\infty} 2^n \mathbb{P}(|Z|>2^n) = \infty.
$$
Since $2^n\mathbb{P}(|Z|>2^n)= \sum_{m=1}^{\infty} \frac{1}{2^m(n+m)} > \frac{1}{2(n+1)}$ and $\sum_{n=1}^{\infty}\frac{1}{2(n+1)}=\infty$, $$\sum_{n=1}^{\infty}\mathbb{P}(|Z_n| > n) =\infty.$$ 
By independence on $(Z_n)_{n\ge 1}$ and the second Borel--Cantelli lemma,
$$
\mathbb{P}(\underbrace{|Z_n|>n \quad \text{infinitely many} ~ n}_{=:E}) = 1,
$$ 
On the event $E$, there exists a subsequence $(n_k)_{k\ge 1}$ such that $\frac{|Z_{n_k}|}{n_k} > 1$, and hence $Z_n/n \not\to 0$. In particular, $\bar Z_n \not\to 0$ whenever $E$ occurs. Consequently,
$$
\mu_n \not\to \mu,
$$
almost surely.

\end{appendix}

\bibliographystyle{imsart-nameyear}
\bibliography{KFLLN}     

\end{document}